\newtheorem{theorem}{Theorem}[section]
\newtheorem{propos}[theorem]{Proposition}
\newtheorem{corol}[theorem]{Corollary}
\newtheorem{remark}[theorem]{Remark}
\numberwithin{equation}{section}
\newcommand{\eps}{{\varepsilon}}
\newcommand{\vol}{{\text{vol}}}
\newcommand{\V}{{\text{V}}}
\newcommand{\Ric}{{\text{Ric}}}
\newcommand{\Rico}{{\stackrel{\circ}{\text{Ric}}}}
\newcommand{\R}{{\text{R}}}
\newcommand{\Hess}{{\text {Hess}}}
\def\SS{{\mathbb  S}}
\begin{document}

\title{Almost-Schur lemma}

\author{Camillo De Lellis}
\address{Institut f\"ur Mathematik, Universit\"at Z\"urich,
8057 Z\"urich, CH}
\email{camillo.delellis@math.unizh.ch}
\author{Peter M. Topping}
\address{Mathematics Institute,
University of Warwick,
Coventry, CV4 7AL, UK}
\email{P.M.Topping@warwick.ac.uk}

\begin{abstract}
Schur's lemma states that every Einstein manifold of dimension
$n\geq 3$ has constant scalar curvature.
In this short note we ask to what extent the scalar curvature
is constant if the traceless Ricci tensor is assumed to
be \emph{small} rather than identically zero. In particular,
we provide an optimal $L^2$ estimate under suitable assumptions
and show that these assumptions cannot be removed.
\end{abstract}

\maketitle

\section{Introduction}

Schur's lemma states that every Einstein manifold of dimension
$n\geq 3$ has constant scalar curvature.
Here $(M,g)$ is defined to be Einstein if its traceless
Ricci tensor 
$$\Rico:=\Ric-\frac{R}{n}g$$ 
is identically zero.

In this short note we ask to what extent the scalar curvature
is constant if the traceless Ricci tensor is assumed to
be \emph{small} rather than identically zero. As it is customary,
we say that $M$ is a closed manifold if it is compact and without
boundary.

\begin{theorem}\label{t:main} % \label
For any integer $n\geq 3$, if $(M,g)$ is a closed 
Riemannian manifold of dimension $n$ with nonnegative
Ricci curvature, then 
\begin{equation}\label{e:mainest}
\int_M \left(R-\overline R\right)^2 
\;\leq\; \frac{4n(n-1)}{(n-2)^2}
\int_M 
%\left|\Ric - \textstyle{\frac{R}{n}}\, g\right|
|\Rico|
^2\, 
\end{equation}
where $\overline R$ is the average value of $R$ over $M$.
Moreover the equality holds if and only if $M$ is Einstein.
\end{theorem}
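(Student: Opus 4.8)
The plan is to pair $R-\overline R$ with the solution of a Poisson equation and then feed in the contracted second Bianchi identity together with the Bochner formula. Since $M$ is closed and $\int_M(R-\overline R)=0$, standard elliptic theory provides $f\in C^\infty(M)$ with $\Delta f=R-\overline R$. Two integrations by parts, using $\operatorname{div}\Rico=\frac{n-2}{2n}\nabla R$ (the contracted Bianchi identity) and the fact that $\Rico$ is trace-free, give
\[
\int_M(R-\overline R)^2=-\int_M\langle\nabla R,\nabla f\rangle=\frac{2n}{n-2}\int_M\langle\Rico,\Hess f\rangle=\frac{2n}{n-2}\int_M\langle\Rico,H\rangle,
\]
where $H:=\Hess f-\frac{\Delta f}{n}g$ denotes the trace-free Hessian. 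The Cauchy--Schwarz inequality in $L^2$ then bounds the last integral by $\big(\int_M|\Rico|^2\big)^{1/2}\big(\int_M|H|^2\big)^{1/2}$.

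Next I would estimate $\int_M|H|^2=\int_M|\Hess f|^2-\frac1n\int_M(\Delta f)^2$. Integrating the Bochner identity $\frac12\Delta|\nabla f|^2=|\Hess f|^2+\langle\nabla f,\nabla\Delta f\rangle+\Ric(\nabla f,\nabla f)$ over $M$ and dropping the nonnegative Ricci term (this is where the hypothesis $\Ric\ge 0$ enters) gives $\int_M|\Hess f|^2\le\int_M(\Delta f)^2=\int_M(R-\overline R)^2$, whence $\int_M|H|^2\le\frac{n-1}{n}\int_M(R-\overline R)^2$. Substituting into the previous bound and cancelling one factor $\big(\int_M(R-\overline R)^2\big)^{1/2}$ (the estimate being trivial when it vanishes) yields exactly \eqref{e:mainest}.

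For the equality clause one direction is immediate: if $M$ is Einstein then $\Rico\equiv 0$, so $R\equiv\overline R$ by Schur's lemma and both sides vanish. Conversely, assume equality holds and, for contradiction, that $\int_M|\Rico|^2>0$; then $\int_M(R-\overline R)^2>0$ as well, and every inequality above must be an equality. Equality in Cauchy--Schwarz forces $H=c\,\Rico$ for a positive constant $c$, and equality in the Bochner step forces $\int_M\Ric(\nabla f,\nabla f)=0$, hence $\Ric(\nabla f)\equiv 0$ since $\Ric\ge 0$. Taking the divergence of $H=c\,\Rico$ and using $\operatorname{div}\Hess f=\nabla\Delta f+\Ric(\nabla f,\cdot)$, the Bianchi identity, $\Ric(\nabla f)\equiv 0$ and $\nabla R\not\equiv 0$ pins down $c=\frac{2(n-1)}{n-2}$. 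Since $\Rico(\nabla f)=\Ric(\nabla f)-\frac Rn\nabla f=-\frac Rn\nabla f$, this gives
\[
\tfrac12\nabla|\nabla f|^2=\Hess f(\nabla f)=\Big(\tfrac{R-\overline R}{n}g+c\,\Rico\Big)(\nabla f)=\Big(-\tfrac{R}{n-2}-\tfrac{\overline R}{n}\Big)\nabla f .
\]
Because $\Ric\ge 0$ forces $R\ge 0$ and $\overline R>0$ (otherwise $R\equiv 0$ and $\int_M(R-\overline R)^2=0$), the coefficient on the right is strictly negative everywhere. Evaluating this identity at a point where $|\nabla f|^2$ attains its maximum forces $\nabla f$ to vanish there, so $|\nabla f|^2\equiv 0$, so $R\equiv\overline R$ and $\int_M(R-\overline R)^2=0$ --- a contradiction. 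Hence $\Rico\equiv 0$, i.e. $M$ is Einstein.

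The inequality itself is a fairly mechanical combination of the Bianchi identity, the Bochner formula and Cauchy--Schwarz; the delicate point is the equality analysis, and in particular noticing that once the algebra forces the value of $c$ the curvature sign makes the coefficient in the last display negative, so that a one-point maximum principle closes the argument.
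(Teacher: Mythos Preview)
Your proof of the inequality \eqref{e:mainest} is essentially identical to the paper's: the same auxiliary Poisson equation, the same use of the contracted Bianchi identity to pass to $\langle\Rico,H\rangle$, the same Cauchy--Schwarz step, and the same Bochner estimate exploiting $\Ric\ge 0$.

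The equality analysis also tracks the paper's argument up to and including the identity $\tfrac12\nabla|\nabla f|^2=\big(-\tfrac{R}{n-2}-\tfrac{\overline R}{n}\big)\nabla f$. The one genuine difference is how you finish. The paper runs the negative gradient flow $\dot\gamma=-\nabla f(\gamma)$ and shows that $\alpha(t)=f(\gamma(t))$ is bounded, nonincreasing and concave on $[0,\infty)$, hence constant, forcing $\nabla f\equiv 0$. You instead observe that $\Ric\ge 0$ gives $R\ge 0$ and (since otherwise $R\equiv 0$) $\overline R>0$, so the scalar coefficient is strictly negative everywhere; evaluating at a maximum of $|\nabla f|^2$ then kills $\nabla f$ at that point and hence globally. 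Your endgame is a clean one-point maximum principle and is arguably more elementary than the gradient-flow argument, at the cost of using the extra observation $\overline R>0$; the paper's argument only needs $R\ge 0$ pointwise and does not single out $\overline R$. Both are correct.
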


Since
\begin{equation}\label{e:split}
\left|\Ric - \textstyle{\frac{\overline R}{n}}\,
g\right|^2
\;=\; |\Rico|^2
%\left|\Ric - \textstyle{\frac{R}{n}}\, g\right|^2
+\frac{1}{n}\left(R-\overline R\right)^2,
\end{equation}
we immediately get:

\begin{corol}
Under the same conditions as in the theorem,
\begin{equation} \label{e:secondest}
\int_M \left|\Ric - \textstyle{\frac{\overline R}{n}}\,
g\right|^2
\;\leq\; \frac{n^2}{(n-2)^2} \int_M \left|\Ric - 
\textstyle{\frac{R}{n}}\, g\right|^2\, ,
\end{equation}
where the equality holds if and only if $M$ is Einstein.
\end{corol}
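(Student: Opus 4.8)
\noindent\emph{Proof proposal.}
Granting Theorem~\ref{t:main}, the Corollary is a one-line consequence of the pointwise identity \eqref{e:split}: integrating it over $M$ and bounding the last term by \eqref{e:mainest},
\[
\int_M\Bigl|\Ric-\tfrac{\overline R}{n}\,g\Bigr|^2
=\int_M|\Rico|^2+\frac1n\int_M(R-\overline R)^2
\;\le\;\Bigl(1+\frac1n\cdot\frac{4n(n-1)}{(n-2)^2}\Bigr)\int_M|\Rico|^2 ,
\]
and since $1+\tfrac{4(n-1)}{(n-2)^2}=\tfrac{(n-2)^2+4(n-1)}{(n-2)^2}=\tfrac{n^2}{(n-2)^2}$ while $|\Rico|^2=\bigl|\Ric-\tfrac{R}{n}\,g\bigr|^2$, this is exactly \eqref{e:secondest}; equality here forces equality in \eqref{e:mainest}, hence $M$ Einstein, and conversely an Einstein manifold makes both sides of \eqref{e:secondest} vanish. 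So the substance is Theorem~\ref{t:main}, which I now sketch.

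The plan rests on two classical identities. First, the contracted second Bianchi identity, which in trace-free form reads $\mathrm{div}\,\Rico=\tfrac{n-2}{2n}\,dR$. Since $R-\overline R$ has zero mean on the closed manifold $M$, I would introduce a potential for it, i.e. solve the Poisson equation $\Delta f=R-\overline R$ for a smooth $f$ (here $\Delta=\mathrm{div}\,\nabla$). Then integrating by parts twice and using that $\Rico$ is trace-free,
\[
\int_M(R-\overline R)^2=\int_M(R-\overline R)\,\Delta f=-\int_M\langle\nabla R,\nabla f\rangle
=-\frac{2n}{n-2}\int_M\langle\mathrm{div}\,\Rico,\nabla f\rangle
=\frac{2n}{n-2}\int_M\langle\Rico,E\rangle ,
\]
where $E:=\Hess f-\tfrac1n(\Delta f)\,g$ is the trace-free Hessian of $f$. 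Cauchy--Schwarz gives $\int_M(R-\overline R)^2\le\tfrac{2n}{n-2}\|\Rico\|_{L^2}\|E\|_{L^2}$, and the matter reduces to estimating $\|E\|_{L^2}$ by $\|R-\overline R\|_{L^2}$ with the sharp constant.

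This last estimate is the only place where the hypothesis enters. From the Bochner formula $\tfrac12\Delta|\nabla f|^2=|\Hess f|^2+\langle\nabla f,\nabla\Delta f\rangle+\Ric(\nabla f,\nabla f)$, integrating over $M$ and using $\int_M\langle\nabla f,\nabla\Delta f\rangle=-\int_M(\Delta f)^2$ yields $\|\Hess f\|_{L^2}^2=\|\Delta f\|_{L^2}^2-\int_M\Ric(\nabla f,\nabla f)$, so $\Ric\ge0$ gives $\|\Hess f\|_{L^2}^2\le\|\Delta f\|_{L^2}^2$ and hence
\[
\|E\|_{L^2}^2=\|\Hess f\|_{L^2}^2-\tfrac1n\|\Delta f\|_{L^2}^2\le\frac{n-1}{n}\|\Delta f\|_{L^2}^2=\frac{n-1}{n}\int_M(R-\overline R)^2 .
\]
Substituting into the Cauchy--Schwarz bound and cancelling one factor $\bigl(\int_M(R-\overline R)^2\bigr)^{1/2}$ (the estimate being trivial when it vanishes) yields exactly \eqref{e:mainest}; the inequality is thus routine once the two identities are set up.

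The delicate point, and what I would expect to be the main obstacle, is the equality statement. One direction is free: an Einstein $M$ has $\Rico\equiv0$ and, by Schur, $R\equiv\overline R$, so both sides of \eqref{e:mainest} vanish. Conversely, equality forces equality in both steps above: $\int_M\Ric(\nabla f,\nabla f)=0$, hence $\Ric(\nabla f,\cdot)\equiv0$ pointwise since $\Ric\ge0$; and a pointwise proportionality $\Rico=c\,E$ for a constant $c$. To extract that $M$ is Einstein I would contract $\Rico=c\,E$ with $\nabla f$, use $\Rico(\nabla f,\cdot)=-\tfrac{R}{n}\,df$ and the Bianchi identity to convert it into a first-order constraint on $|\nabla f|^2$ along the gradient flow of $f$, and exploit $R=\tr\Ric\ge0$ (so $\overline R\ge0$, with $\overline R=0$ forcing $R\equiv0$) to force $f$, hence $R$, to be constant; then $\int_M(R-\overline R)^2=0$, and equality in \eqref{e:mainest} leaves $\int_M|\Rico|^2=0$, i.e. $M$ is Einstein.
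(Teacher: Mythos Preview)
Your proposal is correct and matches the paper's approach exactly: the Corollary is obtained from Theorem~\ref{t:main} via \eqref{e:split} (the paper merely says ``we immediately get''), and your sketch of the Theorem---Poisson potential for $R-\overline R$, contracted Bianchi, Cauchy--Schwarz against the trace-free Hessian, then Bochner under $\Ric\ge0$---is line for line the paper's argument. The only place the paper is more explicit than your equality sketch is that it first takes the \emph{divergence} of $\mu\Rico=E$ (using Bianchi together with $\Ric(\nabla f,\cdot)=0$) to pin down the proportionality constant as $\mu=\tfrac{2(n-1)}{n-2}$; this specific value is what gives the resulting relation $\nabla\tfrac12|\nabla f|^2=-\bigl[\tfrac{\overline R}{n}+\tfrac{R}{n-2}\bigr]\nabla f$ a nonpositive coefficient, so that the concavity argument along the flow of $-\nabla f$ goes through.
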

These estimates are sharp in the following senses.

First, the constants are the best possible because
if we were to reduce either constant the inequalities
would fail for certain small but high-frequency deformations
of the round sphere as we discuss in Section \ref{secvarsect}.
Indeed, if $g_0$ is the metric of the round sphere
then we can take a conformal deformation $(1+f)g_0$
where $f$ is an eigenfunction of the Laplacian on the
sphere corresponding to a suitably large eigenvalue.

Second, the curvature condition $\Ric\geq 0$ cannot
simply be dropped, as we discuss in Section \ref{negRicsect}:
For $n\geq 5$, we show that any such inequality then fails 
even if we restrict $M$ to be diffeomorphic to the sphere. 
For example, we can find metrics $g$ on $S^n$ which
make the ratio of the left-hand side of \eqref{e:secondest}
to the right-hand side of \eqref{e:secondest} arbitrarily 
large.
If we are able to prescribe the topology of $M$, then
the same thing can be engineered even in dimension $n=3$:
we can find manifolds $(M^3,g)$ which make the same ratio
arbitrarily large.
We leave open the possibility that inequalities of
this form may hold for $n=3$ and $n=4$ with constants
depending on the topology of $M$. 
We finally mention that Ge and Wang in \cite{GW} have followed up on the first
version of this paper by demonstrating that for four dimensional
manifolds our hypothesis can be weakend to nonnegative
\emph{scalar} curvature. This is surely not possible for $n\geq 5$
(as can be shown using constructions similar to the ones of 
Section \ref{negRicsect}), whereas the case $n=3$ is still open.

In the context of the sectional Schur's lemma, two results
which are somewhat related to ours have appeared in \cite{Nikolaev}
and \cite{Gribkov}.
However, of all known inequalities which generalise a
geometric rigidity statement, the closest one to our result seems
to be the inequality of
M\"uller and the first author \cite{delellis_mueller},
which generalises the
well-known assertion that the only totally umbilic
closed surfaces of the Euclidean three dimensional
space are spheres.
In fact our method also gives that result %of \cite{delellis_mueller} 
with the sharp constant for convex hypersurfaces of any
dimension, even within more general Einstein ambient 
manifolds; details will appear in \cite{danielPhDthesis}. 
As with the proof in this paper, our method in that case
has the advantage of being completely elementary, whereas
the proof of 
%Remarkably, this proof (which will appear in \cite{danielPhDthesis})
%is much shorter and less complicated: the proof of 
\cite{delellis_mueller}
exploits deep tools from partial differential equations and its
only advantage is that it holds for general smooth surfaces.

\section{Proof of Theorem \ref{t:main}}

\subsection{Proof of \eqref{e:mainest}}
Recall that the contracted second Bianchi 
identity tells us that $\delta \Ric + \frac12 dR=0$
(where $(\delta\Ric)_j:=-\nabla_i R_{ij}$) and
hence that
\begin{equation}
\label{e:bianchi}
\delta \Rico = -\frac{n-2}{2n} dR.
\end{equation}
Let $f:M\to\R$ be the unique solution to
\begin{equation}
\left\{
\begin{aligned}
\Delta f & = R-\overline R \\
\int_M f & = 0.
\end{aligned}
\right.
\end{equation}
We may then compute
\begin{equation}\label{e:est1}
\begin{aligned}
\int_M \left(R-\overline R\right)^2 
&= \int_M \left(R-\overline R\right)\Delta f
= -\int_M \langle dR, df\rangle \\
&= \frac{2n}{n-2}
\int_M \langle \delta\Rico, df\rangle
= \frac{2n}{n-2}
\int_M \langle \Rico, \Hess f\rangle\\
&= \frac{2n}{n-2}
\int_M \langle \Rico, \Hess f - {\textstyle \frac{\Delta f}{n}}g\rangle\\
&\leq \frac{2n}{n-2}\| \Rico \|_{L^2}
\| \Hess f - {\textstyle\frac{\Delta f}{n}}g\|_{L^2}.
\end{aligned}
\end{equation}
Now by integration by parts (i.e. the Bochner formula) 
we know that
\begin{equation}
\label{bochner}
\int_M | \Hess f |^2 = \int_M (\Delta f)^2 
- \int_M \Ric(\nabla f,\nabla f)
\end{equation}
and therefore
\begin{equation}\label{e:Bochner}
\begin{aligned}
\int_M | \Hess f - {\textstyle \frac{\Delta f}{n}}g|^2 
&= \int_M | \Hess f |^2 - \frac{1}{n}(\Delta f)^2\\
&= \frac{n-1}{n}\int_M (\Delta f)^2 
- \int_M \Ric(\nabla f,\nabla f)\\
&= \frac{n-1}{n}\int_M (R-\overline R)^2 
- \int_M \Ric(\nabla f,\nabla f),
\end{aligned}
\end{equation}

and since the Ricci curvature is nonnegative,
we have
\begin{equation}\label{e:est2}
\| \Hess f - {\textstyle \frac{\Delta f}{n}}g\|_{L^2} \leq \left(\frac{n-1}{n}\int_M \left(R-\overline R\right)^2\right)^\frac12,
\end{equation}
which can be combined with \eqref{e:est1} to give \eqref{e:mainest}.

\begin{remark}
We note that the Ricci term which
we throw away in the proof does not destroy optimality
because that term is `lower order' -- i.e. only involves
first derivatives -- and is thus insignificant for very
high frequency $f$.
\end{remark}

\begin{remark}
We only use the Ricci hypothesis in the proof in order to obtain
the $L^2$ estimate
\begin{equation}
\int_M | \Hess f |^2 \leq \int_M (\Delta f)^2.
\end{equation}
Moreover, a slight adaptation of the proof would establish
an $L^p$ version of our results on any manifold 
supporting a Calderon-Zygmund inequality
\begin{equation}
\int_M | \Hess f |^p \leq C\int_M (\Delta f)^p.
\end{equation}
\end{remark}

\subsection{Equality} Obviously, if $M$ is an Einstein manifold,
then both sides of \eqref{e:mainest} vanish. Assume next that 
$M$ satisfies the equality in \eqref{e:mainest}. Then equality
must hold in \eqref{e:est1} and \eqref{e:est2}. Equality holds
in the latter inequality if $\Ric (\nabla f, \nabla f)=0$
(see \eqref{e:Bochner})
and since $\Ric\geq 0$, 
\begin{equation}
\label{e:kernel}
\Ric(\nabla f,\cdot)=0.
\end{equation}
Meanwhile, equality holds in \eqref{e:est1} if and only if
the two tensors $\Rico$ and 
$\Hess f - \textstyle{\frac{\Delta f}{n}}\,g$ are linearly
dependent. If one of them vanishes, than \eqref{e:est1}
implies that $R$ is constant and hence, since equality in \eqref{e:mainest}
holds, that $M$ is Einstein. Otherwise, there is $\mu>0$ such that $\mu\Rico =  (\Hess f - \textstyle{\frac{\Delta f}{n}}\,g)$.
This, together with \eqref{e:kernel} and \eqref{e:bianchi}
implies that
\begin{equation}\label{e:delta}
- \frac{n-1}{n} d \Delta f \;=\; \delta \left(\Hess f - \frac{\Delta f}{n} g \right) \;=\; 
\mu\,  \delta \Rico \;=\; - \mu \frac{n-2}{2n} dR\, .
\end{equation}
Since $\Delta f = R-\bar{R}$, from \eqref{e:delta} we conclude
\begin{equation}\label{e:vanishes?}
\left(\frac{n-2}{2n} \mu - \frac{n-1}{n}\right) dR \;=\; 0\, .
\end{equation}
Thus $R$ is a constant (and hence $M$ is Einstein) unless
$\mu = \textstyle{\frac{2n-2}{n-2}}$.
Assuming this is the case, then
\begin{equation}\label{e:special2}
\Hess f - \frac{\Delta f}{n} g \;=\; \frac{2n-2}{n-2} \Rico\, .
\end{equation}
Combining \eqref{e:kernel} with \eqref{e:special2} and the identity $\Delta f = R-\bar{R}$
we infer
%$$
%f_{;ij} g^{jl} f_{;l} - \frac{R-\bar{R}}{n} f_{;i} = - \frac{2n-2}{(n-2)n} R f_{;i}\, 
%$$
$$
\Hess f (\nabla f,\cdot) - \frac{R-\bar{R}}{n} df = 
- \frac{2n-2}{(n-2)n} R \,df. 
$$
%in any local system of coordinates.
Rewrite this last identity as
\begin{equation}\label{e:special3}
\nabla \frac{|\nabla f|^2}{2} = - \left[\frac{\bar{R}}{n} + \frac{R}{n-2}\right] \nabla f\, .
\end{equation}
%Assume that there is a point $x_0\in M$ such that $|\nabla f| (x_0)>0$ and let
%$\gamma:[0, +\infty[ \to M$ be the solution of 
%$\dot{\gamma} (t) = - \nabla f (\gamma (t))$ with $\gamma (0) = x_0$.
%Since $f(\gamma (t))$ is strictly decreasing and $f$ is bounded, we must have
%$$
%\lim_{t\to\infty} |\nabla f (\gamma (t))|^2\;=\; 0\, .
%$$
%However, by \eqref{e:special3}, 
%$$
%\frac{d}{dt} \left(|\nabla f (\gamma (t))|^2\right)\;=\; \left[\frac{\bar{R}}{n} + \frac{R}{n-2}\right]
%|\nabla f (\gamma (t))|^2\;\geq\; 0\, .
%$$
%This leads obviously to a contradiction. Therefore $\nabla f$ vanishes identically and so does
%$\Delta f$. Recalling that $\Delta f = R - \bar{R}$, we have shown that $R$ is constant.
Fix a point $x_0\in M$ and let
$\gamma:[0, +\infty[ \to M$ be the solution of 
$\dot{\gamma} (t) = - \nabla f (\gamma (t))$ with $\gamma (0) = x_0$.
Consider $\alpha(t) = f (\gamma (t))$. Then $\alpha'(t)= - |\nabla f (\gamma (t))|^2$ and, by \eqref{e:special3}, 
$$
\alpha''(t) = - 2 \left[\frac{\bar{R}}{n} + \frac{R}{n-2}\right]
|\nabla f (\gamma (t))|^2\;\leq\; 0\, .
$$
Thus, $\alpha$ is a bounded nonincreasing concave 
function on $[0, +\infty[$
and therefore it must be constant. We conclude that
$- |\nabla f (x_0)|^2 = \alpha'(0)=0$. The arbitrariness of $x_0$
implies that $f$ is constant which completes the proof.

\section{Second variation arguments}\label{secvarsect}

We will show that the constants in \eqref{e:mainest} and
\eqref{e:secondest} are optimal. We do this by computing
the second variation formula of each side of the
inequalities based at the round sphere of dimension
$n\geq 3$. If the constant in
either inequality were reduced at all, then we could find 
small, high-frequency perturbations of the round sphere
which violated both estimates.

\begin{proof}[Optimality of \eqref{e:mainest}
and \eqref{e:secondest}] First of all observe that, by
\eqref{e:split}, the optimality of one inequality
implies the optimality of the other. We next
consider the standard sphere $M= (\SS^n, \sigma)$ 
for which $\Ric=(n-1)\sigma$ and $R=n(n-1)$, and
deform it through a one-parameter family of Riemannian manifolds
$M_t = (\SS^n, g_t)$ where $g_t = (1+tf) \sigma$. We assume
that $f\in C^\infty (M)$ and $\int_M f = 0$. Set
\begin{eqnarray}
F(t) &:=& C \int_{M_t} \left|\Ric - 
\textstyle{\frac{R}{n}}\, g\right|^2 - \int_{M_t} \left|\Ric - 
\textstyle{\frac{\bar{R}}{n}}\, g\right|^2\nonumber\\
&=& (C-1) \int |\Ric|^2 - \frac{C}{n} \int \R^2 + \frac{1}{n\V}
\left( \int \R\right)^2\nonumber\\
&=:& (C-1) F_1 (t) - \frac{C}{n} F_2 (t) + \frac{1}{n} F_3 (t)
%% Camillo: I changed C to C-1 in the line above
\label{e:123}
\end{eqnarray}
where $V$ is the volume of $M_t$. We write $d \vol$ for the 
volume element.
Straightforward calculations (see for instance Section 2.3.1 of
\cite{RFnotes}) give
\begin{eqnarray}
\left.\partial_t d \vol\right|_0 &=& \frac{n}{2} f\, d \vol\label{e:volf}\\
\left.\frac{d}{dt} \V\right|_0 &=& 0\label{e:Vvanf}\\
\left. \partial_t g^{ij}\right|_0 &=& -f \sigma^{ij} \label{e:gupf}\\
\left.\partial_t \Ric_{ij}\right|_0 &=& - \frac{1}{2} \left(
\Delta f \sigma_{ij} + (n -2) f_{;ij} \right) 
 \label{e:Ricf}\\
\left. \partial_t \R\right|_0 &=& 
- (n-1) \Delta f - (n-1) n f\label{e:Scalf}
\end{eqnarray}
\begin{eqnarray}
\left.\frac{d}{dt} \int \R\right|_0
&=& 0\label{e:iScalf}\, .
\end{eqnarray}
Therefore $F'(0)=0$. We next show
that, for any constant $C< n^2 (n-2)^{-2}$, there is a choice of $f$
such that $F''(0)<0$. This will imply
the optimality of \eqref{e:secondest} as desired.

We start by remarking that
\begin{eqnarray}
F''_2 (0) &=& \frac{d}{dt} \left( 2 \int \R\, \partial_t\R
+ \int \R^2\, \partial_t d \vol\right)\nonumber\\
&=& 2 n (n-1) \int \partial^2_t \R 
+ 2 \int (\partial_t\R)^2\nonumber\\
&&+ 4 n (n-1) \int \partial_t \R\,
\partial_t d \vol + n (n-1) \int \R\, \partial_t^2
d \vol\nonumber\\
&=& 2 n (n-1) \frac{d^2}{dt^2} \int \R +2 \int (\partial_t\R)^2
- n^2 (n-1)^2 \frac{d^2 V}{dt^2}\, .\label{e:F2''}
\end{eqnarray}
Similarly,
\begin{eqnarray*}
F_3'' (0) &=& \frac{d}{dt} \left( - \frac{1}{\V^2} \frac{d\V}{dt}
\left(\int \R\right)^2 
+ \frac{2}{V} \int \R \frac{d}{dt} \int \R\right)\nonumber\\
&=& - n^2 (n-1)^2 \frac{d^2\V}{dt^2}
- \underbrace{\frac{dV}{dt}\frac{d}{dt} 
\left(\frac{1}{\V^2} \left(\int \R\right)^2\right)}
_{\mbox{$=0$ by \eqref{e:Vvanf}}} 
%Camillo: used to have,\eqref{e:iScalf} above
\end{eqnarray*}
\begin{eqnarray}
&&+ \underbrace{\frac{d}{dt} \left(\frac{2}{V} \int \R \right)
\frac{d}{dt} \int \R}_{\mbox{$=0$ by \eqref{e:iScalf}}}
+ 2 n (n-1) \frac{d^2}{dt^2} \int \R\nonumber\\
&=& - n^2 (n-1)^2 \frac{d^2\V}{dt^2} +
2 n (n-1) \frac{d^2}{dt^2} \int \R\, . \label{e:F3''}
\end{eqnarray}
Finally we compute
\begin{equation}\label{e:F1''(a)}
F_1'' (0) \;=\; \int \partial^2_t |\Ric|^2
+ 2 \int \partial_t |\Ric|^2\, \partial_t d \vol + 
\int |\Ric|^2\, \partial^2_t d \vol\, .
\end{equation} 
Note that 
\begin{eqnarray}
\left.\partial_t |\Ric|^2\right|_0 
&=& 2 \partial_t \Ric_{ij} 
\Ric_{kl} g^{ik} g^{jl} + 2 \Ric_{ij} \Ric_{kl}
\partial_t g^{ik} g^{jl}\nonumber\\
&=& 2 (n-1) \left. \partial_t R\right|_0\, .\label{e:F1''(b)}
\end{eqnarray}
\begin{eqnarray}
\left. \partial^2_t |\Ric|^2\right|_0
&=& 2 \partial_t \left[ \partial_t \left(\Ric^{ij} g_{jl}\right)
\Ric_{i\alpha} g^{\alpha l}\right]\nonumber\\
&=& 2 (n-1) \left. \partial^2_t \R\right|_0
+ 2 \left[\partial_t \left(\Ric_{ij} g^{jl}\right)
\partial_t\left (\Ric_{l\alpha} g^{\alpha i}\right)\right]\nonumber\\
&=& 2 (n-1) \left. \partial^2_t \R\right|_0
- 4 (n-1) f\, \partial_t\Ric_{ij} \sigma^{ij}\nonumber\\
&& + 2 |\partial_t\Ric|^2 + 2 n (n-1)^2 f^2\label{e:F1''(c)} 
%Camillo: I corrected the subscript above
\end{eqnarray}
Therefore, we conclude
\begin{eqnarray}
F_1'' (0) &=& 2 (n-1) \int \partial^2_t \R + 2 \int |\partial_t
\Ric|^2 - 4 (n-1) \int f\, \partial_t \Ric_{ij} \sigma^{ij}\nonumber\\
&&+ 2 n(n-1)^2 \int f^2 + 4 (n-1) \int \partial_t \R\,
\partial_t d \vol \nonumber\\
&& + (n-1) \int \R\, \partial^2_t d \vol\nonumber\\
&=& 2 (n-1) \frac{d^2}{dt^2} \int \R + 2 \int |\partial_t\Ric|^2
- 4 (n-1) \int f\, \partial_t \Ric_{ij} \sigma^{ij}\nonumber\\
%Camillo: I corrected the line above
&&+ 2 n (n-1)^2 \int f^2 - n (n-1)^2 \frac{d^2\V}{dt^2}\label{e:F1''}
\end{eqnarray}
Putting together \eqref{e:F1''}, \eqref{e:F2''} and
\eqref{e:F3''} we get
\begin{eqnarray}
&&F''(0)\;=\; -\frac{2C}{n}\int (\partial_t R)^2
+ 2 (C-1) \int |\partial_t \Ric|^2\nonumber\\ 
&&- 4 (C-1) (n-1)
\int f\, \partial_t \Ric_{ij} \sigma^{ij} + 2 (C-1) n (n-1)^2
\int f^2\, . \qquad\mbox{}\label{e:F''}
\end{eqnarray}
Next, we have
\begin{eqnarray}
\int (\partial_t R)^2 &=&
(n-1)^2 \left(\int (\Delta f)^2 - 2n \int |df|^2 
+ n^2 \int f^2\right)\label{e:Scaltf}\\
\int f\, \partial_t \Ric_{ij} \sigma^{ij} 
&=& (n-1) \int |d f|^2\label{e:Ric*hf}\\
\int |\partial_t \Ric|^2 &=&
\frac{n}{4} \int (\Delta f)^2 + \frac{n-2}{2}
\int (\Delta f)^2 + \frac{(n-2)^2}{4} \int |D^2 f|^2
\nonumber\\
&=& \frac{n (n-1)}{4} \int (\Delta f)^2 - \frac{(n-2)^2 (n-1)}{4}
%Camillo: I corrected the line above
\int |df|^2\, .\label{e:Rict^2f}
\end{eqnarray}
(where in the last line we used the Bochner formula \eqref{bochner}).
%Camillo: I removed the umlaut on Bochner, but perhaps you
%know something that I don't?
Assume now that $C= n^2 (n-2)^{-2} -\eps$ for some positive $\eps$. 
Inserting \eqref{e:Scaltf}, \eqref{e:Ric*hf}
and \eqref{e:Rict^2f} into \eqref{e:F''},
we conclude
\begin{equation}\label{e:F''bis}
F''(0)
\;\leq\; -a(n)\eps \int (\Delta f)^2 
+ b(n, \eps) \int |df|^2 + c(n,\eps) \int f^2\, ,
\end{equation}
where the constant $a$ is strictly positive (since $n\geq 3$). 
By choosing $f$ to be an eigenfunction of the Laplacian with 
sufficiently large eigenvalue, we then have $F'' (0)<0$
as desired.
\end{proof}

\section{Counterexamples without the hypothesis $\Ric\geq 0$.}
\label{negRicsect}

Our results assume we are working on a manifold of nonnegative
Ricci curvature.
We now wish to ask when we have a hope of proving an inequality
of the form
\begin{equation}
\label{Cest}
\int_M \left(R-\overline R\right)^2 
\;\leq\; C\int_M |\Rico|^2\, 
\end{equation}
on more general manifolds $(M,g)$.
\begin{propos}
\label{n5prop}
For any $C<\infty$ and integer $n\geq 5$, there exists a
metric $g$ on the sphere $S^n$ such that \eqref{Cest}
fails.
\end{propos}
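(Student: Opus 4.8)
The plan is to construct, for each target constant $C$, a metric on $S^n$ violating \eqref{Cest} by a localized modification that creates a region where the scalar curvature oscillates wildly while the traceless Ricci tensor stays under control. The natural building block is a product-type or warped-product metric on a small ball, glued into a fixed background sphere metric. Specifically, I would start from the round metric and, inside a coordinate ball, replace it with a metric of the form $dr^2 + \varphi(r)^2 g_{S^{n-1}}$ for a carefully chosen warping function $\varphi$, matched smoothly to the round profile $\sin r$ near the boundary of the ball. For such a warped product the Ricci and scalar curvatures are explicit functions of $\varphi,\varphi',\varphi''$, so one can see directly how to make $R-\overline R$ large in $L^2$ while keeping $|\Rico|$ comparatively small.

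The key mechanism, and the reason $n\geq 5$ enters, is a scaling asymmetry between the radial and spherical curvature contributions. In a warped product $dr^2+\varphi^2 g_{S^{n-1}}$, the sectional curvatures in radial planes are $-\varphi''/\varphi$ while those in spherical planes are $(1-(\varphi')^2)/\varphi^2$; the scalar curvature combines these with multiplicities $2(n-1)$ and $(n-1)(n-2)$ respectively, whereas the traceless Ricci tensor measures only the \emph{difference} between the radial Ricci eigenvalue and the spherical one. By taking $\varphi$ to have a rapid oscillation of small amplitude superimposed on a slowly-varying profile — say $\varphi(r)=\psi(r)(1+\delta\sin(r/\eta))$ with $\eta$ small — the term $\varphi''/\varphi\sim -\delta\eta^{-2}\sin(r/\eta)$ blows up, making $R$ oscillate with amplitude $\sim\delta\eta^{-2}$. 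Crucially, the \emph{same} leading oscillation appears in \emph{both} the radial Ricci eigenvalue and (after contraction) feeds into the spherical ones, so the difference controlling $|\Rico|$ is of lower order in the blow-up, scaling like $\delta\eta^{-1}$ or a constant times $\delta\eta^{-2}$ with a smaller coefficient that can be made to vanish to leading order by tuning the relative phases. One then checks that $\int(R-\overline R)^2$ and $\int|\Rico|^2$ are both dominated by the oscillatory region, the former beating the latter by a factor that diverges as $\eta\to 0$, which exceeds any fixed $C$ once $\eta$ is small enough. The condition $n\geq 5$ ensures the multiplicity weights $(n-1)(n-2)$ versus $2(n-1)$ are separated enough for the cancellation in $\Rico$ to be effective; in low dimensions the available degrees of freedom in a cohomogeneity-one ansatz are too constrained, which is exactly why the paper leaves $n=3,4$ partly open and handles $n=3$ only by also varying the topology.

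As an alternative, more flexible construction I would keep in reserve: take a fixed compact region of the form $[0,L]\times N$ with $N$ an Einstein manifold of dimension $n-1$ (e.g. a product of round spheres) carrying a metric $dr^2+g_N$ that already has $\Rico=0$ but nonconstant $R$ impossible — so instead perturb by $u(r)^2 g_N$ and compute as above — and cap it off at both ends to produce a metric on $S^n$; here the extra dimensions ($n-1\geq 4$) give room to choose $N$ so that the traceless part of the Ricci tensor of the perturbed metric is forced to be small relative to its scalar-curvature oscillation. Either way the skeleton of the argument is: (i) write down the curvature quantities for the cohomogeneity-one ansatz; (ii) exhibit a two-parameter family $(\delta,\eta)$ of profiles where $\|R-\overline R\|_{L^2}^2 / \||\Rico|\|_{L^2}^2 \to \infty$; (iii) verify smooth gluing to the round sphere outside the modified ball so the result is genuinely a metric on $S^n$; (iv) conclude that for the given $C$ no inequality \eqref{Cest} can hold.

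The main obstacle I anticipate is step (ii): ensuring that the leading-order oscillation genuinely cancels in $\Rico$ but not in $R$. A naive oscillatory warping will typically make \emph{both} quantities blow up at the same rate, giving only a bounded ratio and proving nothing. The real work is choosing the profile — possibly oscillating two independent warping functions in a product $dr^2+\varphi_1^2 g_{S^{k}}+\varphi_2^2 g_{S^{n-1-k}}$ and tuning their amplitudes/phases so that the radial and the two families of spherical Ricci eigenvalues all carry the \emph{same} dominant oscillation (hence $\Rico$ sees only the subleading part) while the trace $R$ adds them up constructively. Verifying this cancellation is where the hypothesis $n\geq 5$ must be used, and getting the bookkeeping of multiplicities right — together with checking that the subleading terms and the gluing corrections do not spoil the estimate — is the crux of the proof.
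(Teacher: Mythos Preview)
Your proposal has a genuine gap: the crucial cancellation in step (ii) is never established, and your own final paragraph correctly flags this as the crux without resolving it. For a single warped product $dr^2+\varphi^2 g_{S^{n-1}}$ the radial eigenvalue of $\Rico$ works out to a constant multiple of $\varphi''/\varphi+(1-(\varphi')^2)/\varphi^2$, so any high-frequency oscillation in $\varphi''/\varphi$ feeds straight into $\Rico$ at the \emph{same} order as it feeds into $R$; the ratio you need to send to infinity stays bounded. You then retreat to a doubly warped product and assert that phases can be tuned so that all three Ricci eigenvalues carry the same leading oscillation, but you give no computation, and it is not at all clear this can be done while keeping a profile that caps off smoothly to $S^n$. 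Your account of why $n\geq 5$ is required (``multiplicity weights are separated enough'') is speculative and does not correspond to an actual mechanism in what you wrote.

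The paper's argument is entirely different and much simpler: take two round spheres of radii $1$ and $2$ and join them by a thin neck to form a metric on $S^n$. On the two spherical caps $\Rico\equiv 0$, while $R$ takes two distinct constant values, so $\int(R-\overline R)^2$ is bounded below independently of the neck. The only contribution to $\int|\Rico|^2$ comes from the neck, and under scaling the neck by $\lambda$ that integral scales like $\lambda^{n-4}$; hence for $n\geq 5$ it can be made as small as desired, violating \eqref{Cest} for any fixed $C$. Here the role of $n\geq 5$ is completely transparent --- it is exactly the scaling exponent of a curvature-squared integral --- and there is no delicate cancellation to arrange.
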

For smaller $n$, we know counterexamples only when
the topology of $M$ is allowed to depend on $C$:
\begin{propos}
\label{n3prop}
For any $C<\infty$, there exists a closed $3$-manifold
$(M,g)$ such that \eqref{Cest} fails.
\end{propos}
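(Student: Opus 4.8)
The plan is to realize $M$ as a large closed hyperbolic $3$-manifold with a ``balloon'' grafted onto it, so that $M$ consists of two pieces of essentially constant scalar curvature of comparable (large) volume but with different scalar curvatures, glued along a transition region that costs only a \emph{bounded} amount of $\int_M|\Rico|^2$. The topology of $M$ will be that of the hyperbolic factor, which is exactly why it has to be allowed to depend on $C$: a large-volume region with $R$ bounded away from $0$ forces negatively curved topology, whereas on $\SS^3$ the only large-volume regions available are rescaled round pieces, on which $R\to0$ and $\int(R-\bar R)^2$ stays bounded.

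Concretely, fix a closed hyperbolic $3$-manifold $(H,g_{\mathrm{hyp}})$ of volume $V$; such manifolds exist with $V$ arbitrarily large (e.g.\ cyclic covers of a fixed fibred hyperbolic manifold). On $H$ one has $R\equiv-6$ and $\Rico\equiv 0$. Remove a small metric ball $B_a(p)$ and glue in a rotationally symmetric cap $N_r$, i.e.\ a warped product $dt^2+\phi(t)^2g_{\SS^2}$ with $\phi>0$ on $[a,T)$, $\phi(T)=0$, $\phi'(T)=-1$, chosen so that: near $t=a$, $\phi$ agrees with the hyperbolic profile $\phi=\sinh t$ (so the gluing is smooth); then $\phi$ is a straight \emph{funnel} $\phi(t)=\phi(a)+\tfrac12(t-a)$ until $\phi$ reaches a large value $r$; and finally $\phi(t)=r'\cos\big((t-t_0)/r'\big)$ with $r'\sim r$ (a genuine round spherical cap), with two short smoothing collars interpolating $\phi''$ at the joints. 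Then $M$ is diffeomorphic to $H$.

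The computation reduces to the warped‑product formulae $|\Rico|^2=\tfrac23\,\phi^{-4}\big(\phi\phi''+1-(\phi')^2\big)^2$ and $R=2\phi^{-2}\big(1-(\phi')^2-2\phi\phi''\big)$, together with $d\vol=4\pi\phi^2\,dt$. On the hyperbolic part and on the round cap $\Rico$ vanishes; on the funnel $\phi''\equiv0$, so $|\Rico|^2=\tfrac{3}{8}\phi^{-4}$ and $R=\tfrac32\phi^{-2}$, and since $dt=2\,d\phi$ there,
\[
\int_{\mathrm{funnel}}|\Rico|^2\,d\vol=\frac{3\pi}{2}\int\phi^{-2}\,dt=3\pi\!\int_{\phi(a)}^{r}\!\phi^{-2}\,d\phi\le\frac{3\pi}{\phi(a)},\qquad
\int_{\mathrm{funnel}}R^{2}\,d\vol=18\pi\!\int_{\phi(a)}^{r}\!\phi^{-2}\,d\phi\le\frac{18\pi}{\phi(a)},
\]
both bounded uniformly in $r$; the two smoothing collars have fixed size with $\phi$ bounded below, so they contribute a bounded amount. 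Hence $\int_M|\Rico|^2\le C_0$ with $C_0$ depending only on $a$, independently of $V$ and $r$. On the other hand the funnel has volume $\sim r^{3}$ with $R$ small there, the round cap has volume $\sim r^{3}$ with $R=6/r'^2\to0$, and $R\equiv-6$ on $H\setminus B_a(p)$ (volume $\approx V$); choosing $r\gtrsim V^{1/3}$ makes the low‑curvature region have volume $\gtrsim V$, so $\bar R$ is pinned in $(-6,0)$ (in fact $\bar R\to-3$ for $r\sim V^{1/3}$) and $\int_M(R-\bar R)^2=\int_M R^2-\bar R^2\,\vol(M)\ge c\,V$ for an absolute $c>0$, the bound coming from the hyperbolic part. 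Therefore
$\int_M(R-\bar R)^2\big/\int_M|\Rico|^2\ge cV/C_0\to\infty$, and for any prescribed $C$ a sufficiently large $V$ makes \eqref{Cest} fail.

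The step I expect to be the crux is exactly the uniform bound on the transition: naïvely, scale invariance of \eqref{Cest} in dimension $3$ (both sides scale as $\lambda^{-1}$) suggests that opening a neck between scale $a$ and scale $r$ should cost more and more as $r\to\infty$, and indeed a round neck $\SS^2(\rho)\times[0,\ell]$ does. The gain is that a straight funnel keeps $P:=\phi\phi''+1-(\phi')^2$ constant while $\phi\to\infty$, so that $\int|\Rico|^2\,d\vol\sim\int P^2\phi^{-4}\cdot\phi^2\,dt$ converges; one must check carefully that the smoothing at the joints, and the passage from the genuinely hyperbolic metric to the funnel, do not reintroduce an $r$‑ (equivalently $V$‑) dependent cost. (By contrast, for $n\ge5$ the corresponding neck can be made to contribute $o(1)$ as the neck radius shrinks, which is why Proposition~\ref{n5prop} needs no change of topology; for $n=3$ the transition costs a fixed positive amount, which is harmless precisely because the missing volume is supplied by hyperbolic topology.)
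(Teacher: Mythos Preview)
Your argument is correct, and the warped-product computations check out: on the linear funnel $P=\phi\phi''+1-(\phi')^2=\tfrac34$ is constant, giving $\int_{\text{funnel}}|\Rico|^2\,d\vol\le 3\pi/\phi(a)$ uniformly in $r$; the collar near $t=a$ has fixed geometry; and at the top collar $\phi\sim r$ while $|\phi''|=O(1/r)$, so $P=O(1)$ and that collar contributes only $O(r^{-2})$. With $r\sim V^{1/3}$ one gets $\int_M(R-\bar R)^2\asymp V$ against a bounded right-hand side, and $M\cong H$ as you say.

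The paper's construction is different in its second piece and in how the transition is handled. Instead of capping one large hyperbolic manifold with a nearly-flat balloon through a growing funnel, the paper takes two copies $(N^m,\tilde h)$ and $(N^m,2\tilde h)$ of the same $m$-fold cyclic cover at different scales (so with constant scalar curvatures $-6$ and $-3$) and joins them by a \emph{fixed}, $m$-independent neck near a single marked point. The right-hand side of \eqref{Cest} is then literally constant in $m$ (no uniform estimate on a growing object is needed), while the left-hand side grows linearly in $m$ because both constant-$R$ pieces have volume $\sim m$. Your route trades this brevity for an explicit one-piece topology ($M\cong H$) and a concrete curvature calculation; the funnel trick---keeping $P$ bounded while $\phi\to\infty$ so that $|\Rico|^2\,d\vol\sim\phi^{-2}\,dt$ is integrable---is what replaces the paper's ``$m$-independent neck'' and is the genuinely new ingredient in your version. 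Both arguments rest on the same underlying mechanism you identify at the end: in dimension three the transition cannot be scaled away, so one absorbs its fixed cost by letting hyperbolic volume (hence topology) grow.
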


\begin{proof} (Proposition \ref{n5prop}.)
All we have to do is to
connect two round spheres of radii $1$ and $2$, say,
by a small neck.
On the two spherical parts, the traceless Ricci tensor 
$\Rico$ is zero. Therefore (for any $C$) we can make the
right-hand side of \eqref{Cest} as small as desired
for $n\geq 5$, since by scaling down the size of the neck, 
the integral of $|\Rico|^2$ over the neck will also be scaled down
to as small a value as we wish.
Meanwhile,
the different radii of the spherical parts ensure that 
the scalar curvature $R$ is
different on each sphere, and thus the left-hand side of
\eqref{Cest} cannot be small.
\end{proof}

\begin{proof} (Proposition \ref{n3prop}.)
This construction is loosely related to the one above.
The basic building block is any hyperbolic (constant sectional
curvature $-1$) $3$-manifold $(N,h)$ which fibres over the circle.
A result of Thurston implies that if $S$ is a closed 
surface of genus at least $2$, then the $3$-manifold arising
by gluing the boundary components of $[0,1]\times S$ using
a pseudo-Anosov diffeomorphism of the fibre $S$ must admit
a hyperbolic metric (\cite{thurston}).

Let us write $N^m$ for the $m$-fold covering of $N$ obtained
by taking covers of the base circle, and lift
the metric $h$ to a metric $\tilde h$ on $N^m$.
We also pick a point $p$ in $N$ and any one point $\tilde p$ in each
$N^m$ which projects to $p$ under the covering.
The idea then, for each $m\in \mathbb{N}$, is to attach one
$(N^m,\tilde h)$ to another scaled copy $(N^m,2\tilde h)$ via an $m$-independent
neck attached to small neighbourhoods of $\tilde p$ in each
$N^m$, to give a new manifold $(M,g)$.
With this construction, the right-hand side of \eqref{Cest}
is independent of $m$, but the left-hand side will increase
without bound as $m\to\infty$ at an asymptotically linear rate.
\end{proof}

\emph{Acknowledgements.} We thank Vlad Markovic for a useful 
converstation. CDL was supported by a grant of the Swiss National
Foundation.
PMT was supported by The Leverhulme Trust.

\end{document}